\providecommand{\U}[1]{\protect\rule{.1in}{.1in}}
\newcommand{\N}{{\mathbb N}}
\newcommand{\C}{{\mathbb C}}
\newcommand{\R}{{\mathbb R}}
\newcommand{\Z}{{\mathbb Z}}
\newcommand{\T}{{\mathbb T}}
\newcommand{\Ho}{{\mathbb H}}
\newcommand{\So}{{\mathbb S}}
\newcommand{\Mm}{{\mathcal M}}
\def\bi{\begin{itemize}}
\def\ei{\end{itemize}}
\def\un{\underline}
\newcommand{\ba}{\begin{eqnarray}}
\newcommand{\ea}{\end{eqnarray}}
\newcommand{\bas}{\begin{eqnarray*}}
\newcommand{\eas}{\end{eqnarray*}}
\newcommand{\be}{\begin{equation}}
\newcommand{\ee}{\end{equation}}
\def\dxi{\partial_{x_i}}
\def\dxj{\partial_{x_j}}
\def\dx0{\partial_{x_0}}
\def\dr{\partial_{r}}
\def\duxi{\partial_{\un \xi}}
\def\gxi{\Gamma_{\un \xi}}
\def\am{{\mathcal A}(\So^m)}
\def\cks{CK_{\So^m}}
\def\l2sm{L^2(\So^m, d\sigma_m)\otimes \C_{m+1}}
\newtheorem{theorem}{Theorem}
\newtheorem{lemma}[theorem]{Lemma}
\newtheorem{definition}[theorem]{Definition}
\newenvironment{proof}[1][Proof]{\noindent\textbf{#1.} }{\ \rule{0.5em}{0.5em}}
\newtheorem{preremark}[theorem]{Remark}
\newenvironment{remark}{\begin{preremark}\rm}{\hfill$\Diamond$\end{preremark}}
\newtheorem{prenotation}[theorem]{Notation}
\numberwithin{equation}{section}
\numberwithin{theorem}{section}
\begin{document}

\title{{Clifford Coherent State Transforms on Spheres.}}
\author{Pei Dang\thanks{Faculty of Information Technology, Macau University of Science and Technology.}, \,
Jos\'e  Mour\~ao\thanks{Department of Mathematics and Center for Mathematical Analysis, Geometry and Dynamical Systems, Instituto Superior T\'ecnico, University of  Lisbon.}, \, Jo\~ao P. Nunes\footnotemark[2] \, and Tao Qian\thanks{Department of Mathematics, Faculty of Science and Technology, University of  Macau.}}

\maketitle

\bigskip

\begin{abstract}

We introduce a one-parameter family of transforms, $U^t_{(m)}$,  $t>0$, from the Hilbert space of Clifford algebra 
valued square 
integrable functions on the $m$--dimensional sphere, 
$L^2(\So^{m},d\sigma_{m})\otimes \C_{m+1}$, to the Hilbert spaces,  ${\mathcal M}L^2(\R^{m+1} \setminus \{0\},d\mu_t)$,  of monogenic functions on 
$\R^{m+1}\setminus \{0\}$ which are 
square integrable with respect to  appropriate measures, $d\mu_t$. We prove that these transforms are unitary isomorphisms of the Hilbert spaces
and are extensions of the Segal-Bargman coherent state transform,
 $U_{(1)} : L^2(\So^{1},d\sigma_{1}) \longrightarrow {\mathcal H}L^2({\C \setminus \{0\}},d\mu)$, to 
higher dimensional spheres in the context of Clifford analysis. 
In Clifford analysis it is natural to replace the analytic continuation from $\So^m$ to $\So^m_\C$
as in \cite{Ha1, St, HM}
by the Cauchy--Kowalewski extension from $\So^m$ to $\R^{m+1}\setminus \{0\}$.
One then obtains a unitary isomorphism from an $L^2$--Hilbert space to
an Hilbert space of solutions of the Dirac equation, that is to a Hilbert space of monogenic functions.

\vskip 0.2cm

\noindent Keywords: Clifford analysis,  Coherent state transforms.

\end{abstract}

\tableofcontents

\section{Introduction}

In this work, we continue to explore the extensions of coherent state transforms to the context of Clifford 
analysis started in \cite{KMNQ, DG, MNQ, PSS}.
In \cite{MNQ}, an extension of the coherent state transform (CST) to unitary maps
from the spaces of $L^2$ functions  on $M=\R^m$ and on the $m$--dimensional torus, $M=\T^m$,
to the spaces of square integrable monogenic functions on $\R \times M$ was studied.

We consider the cases when $M$ is an $m$--dimensional sphere, $M=S^m$, equipped with the 
$SO(m+1, \R)$--invariant metric of unit volume. These cases are a priori more complicated than
those studied before
as the
transform uses (for $m>1$) the Laplacian and the Dirac operators for the non--flat metrics
on the spheres. We show that there is a unique $SO(m+1, \R)$ invariant measure
on $\R \times \So^m \cong \R^{m+1}\setminus \{0\}$ such that  the natural Clifford CST (CCST)
is unitary. This transform is factorized into a contraction operator given by 
heat operator evolution 
at time $t=1$ followed by Cauchy-Kowalewsky (CK) extension, which 
exactly compensates the contraction for our choice of measure on
$\R^{m+1}\setminus \{0\}$.
In the usual coherent state Segal--Bargmann transforms \cite{Ba, Se, Ha1, Ha2, St, HM}, instead
of the CK extension  to a manifold  with
one more real dimension, one
considers the analytic continuation to a
complexification of the initial manifold (playing the role of phase space of the system).
The CCST is of interest in Quantum Field Theory as it establishes 
natural unitary isomorphisms between  
Hilbert spaces of solutions of the Dirac equation  and  one--particle Hilbert spaces in the Schr\"odinger representation.  The standard CST, on the other hand, 
studies the unitary equivalence of the Schr\"odinger representation
with special K\"ahler representations with the wave functions defined
on the phase space.

In the section \ref{ss-32} we consider a one-parameter family of CCST, using heat operator evolution 
at time $t>0$ followed by CK extension, and we show that, by changing 
the measure on $\R^{m+1}\setminus \{0\}$ to a new Gaussian 
(in the coordinate $\log(|\un x|)$) measure $d\mu_t$, these transforms 
are unitary. As $t$ approaches $0$ (so that the first factor in the 
transform is contracting less than for higher values of $t$)  the measures $d\mu_t$ 
become more concentrated around the radius $|\un x|=1$ sphere
and as  $t \to 0$, the measure $d\mu_t$ converges to
the measure 
$$\delta(y) \, dy \,  d\sigma_m \, ,
$$ 
where $y=\log(|\un x|)$, 
supported on $\So^m$.

%\noindent \textcolor{red}{add ref to Diki + Pena-Sommen-Sabadini}

\section{Clifford analysis}
\label{ss-ca}

 Let us briefly recall from \cite{BDS, DSS, DS, LMQ, So, FQ, PQS, DQC},  some definitions and results from  Clifford analysis.
 Let $\R_{m+1}$ denote the real Clifford algebra with $(m+1)$ generators, $ e_j, j = 1, \dots, m+1$, identified
with the canonical basis of $\R^{m+1} \subset \R_{m+1}$ and satisfying the relations 
$e_i  e_j +  e_j \ e_i = - 2 \delta_{ij}$. Let $\C_{m+1}=\R_{m+1}\otimes \C$.
We have that $\R_{m+1} = \oplus_{k=1}^{m+1} \R_{m+1}^k$,
where $ \R_{m+1}^k$ denotes the space of $k$-vectors, defined by $\R_{m+1}^0 = \R$ and  $\R_{m+1}^k 
= {\rm span}_\R \{ e_A \, : \, A \subset \{1, \dots , m+1\}, |A| = k\}$, where
$e_{i_1 \dots i_k} = e_{i_1} \dots  e_{i_k}$.  
% We have, in particular, the subspace 
%  of $1$-vectors of the form $\un x = \sum_{j=1}^{m} x_j \tilde e_j$ which we identify with 
% $\R^{m}\subset \R^{m+1}\cong \R_{m+1}^1$.

% and $\R^{m+1}$ is identified with the space, $\R_m^{\leq 1}$,  of vectors of the form, $$\vec x = x_0 + \un x =
% x_0 + \sum_{j=1}^m \, x_j e_j.$$
Notice also that $\R_1 \cong \C$ and $\R_2 \cong \Ho$.
The inner product in $\R_{m+1}$ is defined by
$$
<u, v> = \left(\sum_A u_A  e_A ,  \sum_B v_B  e_B\right) = \sum_A u_A v_A .
$$
%and therefore, $\un x^2 = - |\un x|^2 = - (x, x).$
The Dirac operator is defined as
$$
 \un D = \sum_{j=1}^{m+1} \,  e_j \, \dxj .
$$
We have that $\un D^2 = - \Delta_{m+1}$.

Consider the subspace of $\R_{m+1}$ of $1$-vectors  
$$
\{\un x = \sum_{j=1}^{m+1} x_j e_j: \,\, x=(x_1,\dots,x_m)\in \R^{m+1}\}\cong \R^{m+1},
$$
which we identify with $\R^{m+1}$. Note that $\un x^2 = - |\un x|^2 = - (x, x).$

Recall that a continuously  differentiable function $f$ on an open domain ${\cal O} \subset \R^{m+1}$,  
with values on $\C_{m+1}$, 
is called (left) monogenic on ${\cal O}$  if it satisfies the Dirac equation (see, for example, \cite{BDS,DSS,So})
\be\nonumber
\un D f(x)  =  \sum_{j=1}^{m+1} \,  e_j \, \dxj \, f(x) = 0.
\ee
For $m=1$, monogenic functions on $\R^2$ correspond to holomorphic
functions of the complex variable $x_1+e_1e_2 \, x_2$.

The Cauchy kernel, 
$$
E(x)  = \frac{\overline{\un{x}}}{|x|^{m+1}}, 
$$
is a monogenic function on $\R^{m+1} \setminus \{0\}$. In the spherical coordinates,
$r = e^y=  |\un x|, \xi= \frac{\un x}{|\un x|}$, 
the Dirac operator reads
\be
\label{e-sdo}
\un D = \frac 1r \, \un \xi \left(r \dr + \Gamma_{\un \xi}\right) = e^{-y} {\un \xi} \left(\partial_y + \Gamma_{\un \xi}\right) ,
\ee
where $\Gamma_{\un \xi}$ is the spherical Dirac operator,
$$
\Gamma_{\un \xi} = - \un \xi \duxi =   - \sum_{i < j} \, e_{ij} \left(x_i  \dxj - x_j \dxi \right).
$$
We see from (\ref{e-sdo}) that the equation for monogenic 
functions in the spherical coordinates is, 
on $\R^{m+1} \setminus \{0\}$, equivalent to
\be
\label{e-mesc}
\un D (f) = 0 \Leftrightarrow \partial_y \- f = -\Gamma_{\un \xi}(f)     \, , \qquad 
r>0.
\ee

The Laplacian $\Delta_x$ has the form
$$
\Delta_x = \dr^2 + \frac mr \dr + \frac{1}{r^2} \Delta_{\un \xi},
$$
where $\Delta_{\un \xi}$ is the Laplacian on the sphere (for the invariant metric).
The relation between the spherical Dirac operator and the spherical Laplace operator
is (see eg \cite{DSS}, (0.16) and section II.1)
\be
\label{e-lop}
\Delta_{\un \xi} = \left((m-1) I -  \gxi\right) \, \gxi 
\ee
Let ${\mathcal H}(m+1, k)$ denote the space of ($\C_{m+1}$--valued) spherical harmonics of degree $k$. These are the eigenspaces 
of the self--adjoint spherical Laplacian, $\Delta_{\un \xi}$,
\ba \label{e-loe}
\nonumber f & \in & {\mathcal H}(m+1, k) \\
\Delta_{\un \xi}  (f) &=& -k(k+m-1) f.
\ea
The spaces  ${\mathcal H}(m+1, k)$ are a direct sum of eigenspaces of the self--adjoint spherical 
Dirac operator
\ba 
\label{e-sdo1} 
\nonumber {\mathcal H}(m+1, k) &=& {\mathcal M}^+(m+1, k) \, \oplus \, {\mathcal M}^-(m+1, k-1)  \\
%\nonumber  f &=& P_k(f) + Q_{k-1}(f) \\
\gxi (P_k(f)) &=& -k P_k(f) \\
\nonumber \gxi (Q_l (f)) &=& (l+m) Q_l(f) \,   ,  \quad f \in L^2(\So^m, d\sigma_m)\otimes \C_{m+1},
\ea
where $P_k, Q_l$, denote the orthogonal projections on the subspaces ${\mathcal M}^+(m+1, k)$
and ${\mathcal M}^-(m+1, l)$ of $L^2(\So^m, d\sigma_m)\otimes \C_{m+1}$.
The functions in ${\mathcal M}^+(m+1, k)$ and ${\mathcal M}^-(m+1, l)$ 
are in fact the restriction to $\So^m$ of (unique) monogenic functions 
\ba \label{e-ck1}
\nonumber \tilde P_k(f)(\un x) &=& r^k \, P_k(f)\left(\frac{\un x}{|\un x|}\right)  \\
\tilde Q_l(f)(\un x) &=& r^{-(l+m)} \, Q_l(f)\left(\frac{\un x}{|\un x|}\right)  ,  \quad f \in L^2(\So^m, d\sigma_m)\otimes \C_{m+1}, k, l \in \Z_{\geq 0}  ,
\ea
where, for all $f \in L^2(\So^m, d\sigma_m)\otimes \C_{m+1}$, $\tilde P_k(f)$ are monogenic homogeneous polynomials of degree $k$
and $\tilde Q_l(f)$ are monogenic functions on $\R^{m+1}\setminus \{0\}$,
homogeneous of degree $-(l+m)$.

\section{Clifford Coherent State Transforms on Spheres.}
\label{s-3}

\subsection{CCST on spheres and its unitarity}

\begin{definition}
\label{d-am}
Let  $\am$ be the space of analytic $\C_{m+1}$--valued functions on $\So^m$ 
with monogenic continuation to the whole of $\R^{m+1}\setminus \{0\}$.
\end{definition}

\begin{remark}
Let $V$ denote the space of finite linear combinations of spherical monogenics, 
\be
\label{e-ssm}
\nonumber V = {\rm span}_\C \left\{ P_k(f),  Q_l(f), \, \, k, l \in  \Z_{\geq 0} , f \in L^2(\So^{m}, d\sigma_{m}) \otimes \C_{m+1} \right\}.
\ee
We see from (\ref{e-ck1}) that $V \subset \am$. We will denote by $\tilde V$ the space of CK extensions of elements of
$V$ to $\R^{m+1}\setminus \{0\}$ (see (\ref{e-ck1})),
\be
\label{e-ssm2}
\tilde V =  {\rm span}_\C \left\{\tilde  P_k(f),  \tilde Q_l(f), \, \, k, l \in  \Z_{\geq 0} , f \in L^2(\So^{m}, d\sigma_{m}) \otimes \C_{m+1} \right\}.
\ee
\end{remark}

In analogy with the case $m=1$ and also with  the ``usual CST on spheres",
introduced in \cite{St, HM},
we will introduce the CCST
\ba
\label{e-cstm}
\nonumber U_{(m)} \, &:& \,   L^2(\So^{m}, d\sigma_{m}) \otimes \C_{m+1} \longrightarrow 
{\mathcal M}L^2(\R^{m+1}\setminus \{0\}, \tilde \rho_m \,  d^{m+1}x) \\
U_{(m)} &=& CK_{\So^m} \circ  e^{\Delta_{\un \xi}/2} =  e^{-y \Gamma_{\un \xi}} \circ e^{\Delta_{\un \xi}/2} \\
\nonumber U_{(m)}(f)(\un x) &=& \int_{\So^m} \, \tilde K_1(\un x, \un \xi) \, f(\xi) \, 
d \sigma_m  \, , 
\ea
where $CK_{\So^m} \, : \,  \am \longrightarrow \Mm(\R^{m+1}\setminus \{0\})$ denotes the CK extension, 
$K_1$ is the heat kernel on $\So^m$  at time $t=1$
and $\tilde K_1(\cdot , \xi)$ denotes the CK extension
of $K_1$ to  
$\R^{m+1}\setminus \{0\}$ in its first variable (see Lemma \ref{l-1}, (\ref{e-kex}) and (\ref{e-uut}) below).
Our goal is to find (whether there exists) a function $\tilde \rho_m$ on 
$\R^{m+1}\setminus \{0\}$,
$$
\tilde \rho_m(\un x) = \rho_m(y) , \, \qquad y = \log(|\un x|)
$$
which makes the (well defined) map in (\ref{e-cstm}) unitary.
For $m=1$ there is a unique positive answer to the above question 
given by
$$
\rho_1(y) = \frac 1{\sqrt{\pi}} \, e^{-y^2-2y}
$$
so that
$$
\tilde \rho_1(\un x) = \frac 1{\sqrt{\pi}} \, e^{-\log^2(|\un x|)-2\log(|x|)}.
$$

Our main result in the present paper is the following.

\begin{theorem}
\label{t-1}
The map $U_{(m)}$ in (\ref{e-cstm}) is a unitary isomorphism for 
\be
\label{e-rom}
\tilde \rho_m(\un x) =  \frac {e^{-\frac{(m-1)^2}{4}}}{\sqrt{\pi}} \, e^{-\log^2(|\un x|)-2\log(|\un x|)}.
\ee
\end{theorem}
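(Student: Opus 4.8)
The plan is to diagonalize $U_{(m)}$ with respect to the orthogonal spectral decomposition of $L^2(\So^m, d\sigma_m)\otimes \C_{m+1}$ into eigenspaces of the spherical Dirac operator $\gxi$, and to reduce the verification of unitarity to a one-dimensional Gaussian integral in the radial variable $y=\log(|\un x|)$. First I would record, from (\ref{e-sdo1}) and (\ref{e-loe}), that on ${\mathcal M}^+(m+1,k)$ the operator $\gxi$ acts by $-k$ and $\Delta_{\un \xi}$ by $-k(k+m-1)$, while on ${\mathcal M}^-(m+1,l)\subset {\mathcal H}(m+1,l+1)$ it acts by $(l+m)$ and $\Delta_{\un \xi}$ by $-(l+1)(l+m)$. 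Writing $U_{(m)} = e^{-y\gxi}\circ e^{\Delta_{\un \xi}/2}$, the heat factor contributes the scalar $e^{-k(k+m-1)/2}$ (resp. $e^{-(l+1)(l+m)/2}$), and the factor $e^{-y\gxi}$ contributes exactly the radial powers $r^k$ (resp. $r^{-(l+m)}$) of the CK extensions (\ref{e-ck1}). The Gaussian decay of the heat factor guarantees convergence of the resulting series on all of $\R^{m+1}\setminus\{0\}$, so $U_{(m)}$ is well defined on the whole of $L^2(\So^m, d\sigma_m)\otimes \C_{m+1}$.

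Next I would compute the target norm. Passing to spherical coordinates $r=e^y$, $\xi=\un x/|\un x|$, one has $d^{m+1}x = e^{(m+1)y}\,dy\,d\sigma_m$ up to the fixed normalization constant of the unit-volume sphere, so for $f\in {\mathcal M}^+(m+1,k)$ the integral against $\tilde\rho_m\,d^{m+1}x$ factorizes into the angular norm $\|f\|^2_{L^2(\So^m)}$ times the radial integral
\[
e^{-k(k+m-1)}\int_{-\infty}^{\infty} e^{2ky}\,\rho_m(y)\,e^{(m+1)y}\,dy .
\]
Substituting $\rho_m(y)=\pi^{-1/2}e^{-(m-1)^2/4}\,e^{-y^2-2y}$ from (\ref{e-rom}) and completing the square, the Gaussian integral evaluates to $e^{-(m-1)^2/4}\,e^{(2k+m-1)^2/4}$. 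The key algebraic identity $(2k+m-1)^2-(m-1)^2=4k(k+m-1)$ then shows the radial integral equals $e^{k(k+m-1)}$, which cancels the square of the heat contraction exactly and returns $\|f\|^2_{L^2(\So^m)}$. The same computation on ${\mathcal M}^-(m+1,l)$, using $(2l+m+1)^2-(m-1)^2=4(l+1)(l+m)$, produces the identical normalization; this coincidence of the two sectors is precisely what forces the universal constant $e^{-(m-1)^2/4}$ in (\ref{e-rom}).

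It remains to check that $U_{(m)}$ is a genuine isometry (not merely sector by sector) and that it is onto. Orthogonality of the images is immediate: distinct summands ${\mathcal M}^\pm$ consist of spherical monogenics of distinct harmonic degree, whose angular integrals against one another vanish, so all cross terms in the target inner product drop out and the sector-wise isometries assemble into a global isometry on the dense subspace $V$. For surjectivity I would use the spherical form (\ref{e-mesc}) of the Dirac equation: expanding any monogenic $F\in \Mm L^2(\R^{m+1}\setminus\{0\},\tilde\rho_m\,d^{m+1}x)$ on each sphere $|\un x|=r$ in $\gxi$-eigenfunctions, the equation $\partial_y f=-\gxi(f)$ forces each component to be $e^{-y\,\cdot\,(\mathrm{eigenvalue})}$ times its boundary datum, i.e. a homogeneous monogenic of the form (\ref{e-ck1}); finiteness of the $\tilde\rho_m$-norm then controls the coefficients and shows $F$ lies in the closure of $\tilde V$.

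I expect this last step — the Laurent-type decomposition of an arbitrary $L^2$ monogenic function on the punctured space into its homogeneous ${\mathcal M}^\pm$ components, together with the verification that this decomposition is orthogonal and complete for the weighted measure $\tilde\rho_m\,d^{m+1}x$ — to be the main obstacle, since the isometry computation itself is an elementary Gaussian integral once the spectral data of $\gxi$ on $V$ are in hand.
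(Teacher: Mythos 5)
Your proposal is correct and follows essentially the same route as the paper: the same factorization $U_{(m)}=e^{-y\gxi}\circ e^{\Delta_{\un\xi}/2}$ acting diagonally on the spherical monogenic decomposition, the same sector-wise radial Gaussian integrals (your explicit evaluations via $(2k+m-1)^2-(m-1)^2=4k(k+m-1)$ are exactly the verification of the paper's two constraint systems (\ref{e-const1})--(\ref{e-const2}), and you correctly isolate the ``remarkable'' point that both sectors force the same constant $e^{-(m-1)^2/4}$), and the same surjectivity mechanism (closedness of the image of an isometry plus density of $\tilde V$ from (\ref{e-ssm2}), which the paper gets from the Laurent expansion of monogenic functions on $\R^{m+1}\setminus\{0\}$ and you re-derive via the separated equation $\partial_y f=-\gxi(f)$ from (\ref{e-mesc}) --- the same fact in different clothing).

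Two steps need repair, though both are local. First, your claim that ``the Gaussian decay of the heat factor guarantees convergence'' is the conclusion of the paper's Lemma \ref{l-2}, not a one-line observation: to sum the Laurent series pointwise you need sub-Gaussian growth in $k$ of the sup-norms of the components, which the paper establishes through the explicit Gegenbauer estimates (\ref{e-ineq}) and (\ref{e-ineq2}) together with Stirling's formula (giving $|C^{\pm}_{m+1,k}|\leq e^{sk(k+m-1)/2}$ for any $s\in(0,1)$ and large $k$, hence $|P_k(f)(\eta)|, |Q_k(f)(\eta)| \lesssim (2k+m-1)!\,\|f\|$); an $L^2$ bound $\|P_k(f)\|\leq\|f\|$ alone does not control the pointwise values. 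Second, your justification of orthogonality --- ``distinct summands ${\mathcal M}^{\pm}$ consist of spherical monogenics of distinct harmonic degree'' --- fails precisely for the pairs ${\mathcal M}^+(m+1,k)$ and ${\mathcal M}^-(m+1,k-1)$, which by (\ref{e-sdo1}) live in the \emph{same} space ${\mathcal H}(m+1,k)$; their orthogonality instead follows from the self-adjointness of $\gxi$ and the distinctness of the eigenvalues $-k$ and $k+m-1$, a fact already available in your own spectral setup, so the fix is immediate but the stated reason is wrong as written.
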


\begin{remark}
\label{r-1}
It is remarkable that the only dependence on $m$ of the corresponding function
$\rho_m(y)$ is in the constant multiplicative factor, $e^{-\frac{(m-1)^2}{4}}$.
\end{remark}

Given the factorized form of $U_{(m)}$ in (\ref{e-cstm}) we have the
diagram
\begin{align}
 \label{d333}
\begin{gathered}
\xymatrix{
&&  \mathcal{M}L^2 (\R^{m+1} \setminus \{0\}, \tilde \rho_m \, d^{m+1}x)  \\
L^2(\So^m, d \sigma_m)\otimes \C_{m+1}  \ar@{^{(}->}[rr]_{e^{{\Delta_{\un \xi}}/2}}  \ar[rru]^{U_{(m)}} && \am 
, \ar[u]_{CK_{\So^m} \, = \, e^{-y \Gamma_{\un \xi}} }
  }
\end{gathered}
\end{align}
We divide the proof of Theorem \ref{t-1} into several lemmas.

\begin{lemma}
\label{l-1}
Let $f \in \am$ and consider its Dirac operator spectral decomposition or, equivalently, its decomposition
in spherical monogenics, 
\be
\label{e-mdec}
f = \sum_{k \geq 0} \, P_k(f) + \sum_{k \geq 0} \, Q_k(f).  
\ee
Then its CK extension is given by
\ba
\label{e-ck2}
\nonumber
\cks(f)(\un x) &=& 
\sum_{k \geq 0} \, \tilde P_k(f)(\un x) + \sum_{k \geq 0} \, \tilde Q_k(f)(\un x)\\
&=&
\sum_{k \geq 0} \, |\un x|^k \, P_k(f)\left(\frac{\un x}{|\un x|}\right)  +  \sum_{k \geq 0} \,  |\un x|^{-(k+m)} \, Q_k(f)\left(\frac{\un x}{|\un x|}\right)  \\
&=& e^{-y \Gamma_{\un \xi}} (f) = |\un x|^{-\Gamma_{\un \xi}} (f)  .
\nonumber  
\ea
\end{lemma}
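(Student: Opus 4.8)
The plan is to read the monogenicity condition (\ref{e-mesc}) in spherical coordinates, $\partial_y \, F = -\gxi(F)$ with $y = \log(|\un x|)$, as a first--order evolution equation in the radial variable, and to solve it using the spectral decomposition of $\gxi$ furnished by (\ref{e-sdo1}). Concretely, for $f \in \am$ let $F = \cks(f)$ denote its monogenic continuation to $\R^{m+1}\setminus\{0\}$, which is unique by the identity theorem (real--analyticity) for monogenic functions. Equation (\ref{e-mesc}) says $F$ solves $\partial_y F = -\gxi(F)$ subject to the initial condition $F|_{y=0} = f$ on $\So^m$, so formally $F(\un x) = e^{-y\gxi}(f)(\un\xi)$ for $\un x = e^y\un\xi$. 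Since $y = \log(|\un x|)$ this is already the asserted identity $\cks(f) = e^{-y\gxi}(f) = |\un x|^{-\gxi}(f)$, the last equality being immediate from $|\un x|^{-\gxi} = e^{-\gxi\log|\un x|}$. It then remains to convert this formal solution into the explicit series and to justify its convergence.

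First I would dispose of the diagonal computation. On each $\gxi$--eigenspace the evolution is explicit: by (\ref{e-sdo1}), $\gxi(P_k(f)) = -k\,P_k(f)$ and $\gxi(Q_l(f)) = (l+m)\,Q_l(f)$, whence
\[
e^{-y\gxi}(P_k(f)) = e^{ky}P_k(f) = |\un x|^{k}P_k(f), \qquad e^{-y\gxi}(Q_l(f)) = e^{-(l+m)y}Q_l(f) = |\un x|^{-(l+m)}Q_l(f).
\]
Evaluated at $\un\xi = \un x/|\un x|$ these are precisely the solid spherical monogenics $\tilde P_k(f)$ and $\tilde Q_l(f)$ of (\ref{e-ck1}), each of which is genuinely monogenic on $\R^{m+1}\setminus\{0\}$. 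Thus on the dense subspace $V$ of finite linear combinations of spherical monogenics, $\cks$ acts diagonally in the spherical--monogenic basis and the lemma holds termwise.

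The main work, and the main obstacle, is to pass from $V$ to a general $f\in\am$, i.e. to show that the CK extension of the infinite sum (\ref{e-mdec}) is the sum of the CK extensions. The clean route is uniqueness combined with a Laurent--type expansion for monogenic functions: every monogenic function on an annular domain, in particular on $\R^{m+1}\setminus\{0\}$, decomposes uniquely into inner and outer solid spherical monogenics, converging uniformly on compact subsets. The decay of coefficients making both the $|\un x|^{k}P_k(f)$ series converge for large radius and the $|\un x|^{-(l+m)}Q_l(f)$ series converge for small radius is exactly what the hypothesis $f\in\am$ encodes, namely the existence of a monogenic continuation to arbitrarily large \emph{and} arbitrarily small radii. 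Since uniform limits of monogenic functions are monogenic (the Cauchy integral formula passes to the limit), the summed series is monogenic on $\R^{m+1}\setminus\{0\}$ and restricts to $f = \sum_k P_k(f) + \sum_k Q_k(f)$ on $\So^m$; by uniqueness of the monogenic continuation it must coincide with $\cks(f)$. I expect the delicate point to be precisely this convergence/closure argument, phrased so that the hypothesis defining $\am$ is used to guarantee that the eigenspace--by--eigenspace action of $e^{-y\gxi}$ produces a locally uniformly convergent, honestly monogenic series rather than a merely formal one.
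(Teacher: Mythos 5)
Your proposal is correct and follows essentially the same route as the paper: the substantive step in both is the Laurent expansion of the monogenic continuation in inner and outer spherical monogenics (\cite{DSS}, Theorem 1, p.~189), uniformly convergent on compact subsets of $\R^{m+1}\setminus\{0\}$, whose coefficients are identified with $\tilde P_k(f)$ and $\tilde Q_k(f)$ by uniqueness, together with the diagonal action of $e^{-y\Gamma_{\un\xi}}$ on the $\gxi$--eigenspaces from (\ref{e-sdo1}). Your preliminary framing of (\ref{e-mesc}) as a radial evolution equation is a harmless (and illuminating) repackaging of the operator identity $\cks = e^{-y\gxi}$, which the paper obtains directly from self-adjointness of $\gxi$.
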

\begin{proof}
Since $f \in \am$ the two first lines in the right hand side
of (\ref{e-ck2}) are the Laurent expansion of
$\cks(f)(\un x) $ in spherical monogenics (see \cite{DSS}, Theorem 1,
p. 189), uniformly 
convergent on compact subsets of 
$\R^{m+1} \setminus \{0\}$.
The third line in the right hand side follows from
(\ref{e-sdo1}) and 
the fact that $\Gamma_{\un \xi}$ is 
a self-adjoint operator. 
\end{proof}

\begin{remark}
We thus see that, for $f \in \am$, the operator 
of CK extension to $\R^{m+1} \setminus \{0\}$
is
$$
 \cks   =  e^{-y \Gamma_{\un \xi}}  ,
$$
in agreement with (\ref{e-mesc}) and (\ref{e-cstm}).
\end{remark}

\begin{lemma}
\label{l-2}
Let $f \in \l2sm$ and consider its  decomposition
in spherical monogenics, 
\be
\nonumber
f = \sum_{k \geq 0} \, P_k(f) + \sum_{k \geq 0} \, Q_k(f).  
\ee
Then the map
\bas
\label{e-cstm10}
 U_{(m)} \, &:& \,   L^2(\So^{m}, d\sigma_{m}) \otimes \C_{m+1} \longrightarrow 
{\mathcal M}(\R^{m+1}\setminus \{0\})\\
U_{(m)} &=& CK_{\So^m} \circ  e^{\Delta_{\un \xi}/2}= e^{-y \Gamma_{\un \xi}} \circ e^{\Delta_{\un \xi}/2} \,  ,
\eas
where ${\mathcal M}(\Omega)$ denotes the space of monogenic functions
on the open set $\Omega \subset \R^{m+1}$, 
is  well  defined and
\ba
\label{e-ck3}  \nonumber
U_{(m)}(f)(\un x) &=& e^{-y \Gamma_{\un \xi}} \circ e^{\Delta_{\un \xi}/2}(f)(\un x)=\\
\nonumber &=& \sum_{k \geq 0} \, e^{-k(k+m-1)/2} \, |x|^k \, P_k(f)\left(\frac{\un x}
{|x|}\right)  +  \sum_{k \geq 0} \,  
 e^{-(k+1)(k+m)/2} \,  |x|^{-(k+m)}  \,  Q_k(f)\left(\frac{\un x}{|x|}\right)  \\
&=& \int_{\So^m} \tilde K_1(\un x, \un \xi) \, f(\xi) \, d\sigma_m(\xi), 
\ea
where $K_1$ denotes the heat kernel on $\So^m$ at time $t=1$ and $\tilde K_1$ is the CK extension
to $\R^{m+1} \setminus \{0\}$
of $K_1$ in its first variable. 
\end{lemma}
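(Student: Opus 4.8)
The plan is to evaluate $U_{(m)}$ on each spherical monogenic appearing in the decomposition of $f$, to reassemble the resulting series, and to check at each stage that the intermediate object lies in the space needed to invoke Lemma \ref{l-1}.

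First I would act with the heat operator $e^{\Delta_{\un\xi}/2}$ on the two families of spherical monogenics. Since each $P_k(f)$ and each $Q_k(f)$ is an eigenfunction of the spherical Laplacian, $e^{\Delta_{\un\xi}/2}$ acts by a scalar on each summand. For the $P_k$ part, (\ref{e-loe}) gives the eigenvalue $-k(k+m-1)$ and hence the factor $e^{-k(k+m-1)/2}$. For the $Q_k$ part I would read off the spherical Dirac eigenvalue $\gxi=k+m$ from (\ref{e-sdo1}) and feed it into the identity (\ref{e-lop}): on $\mathcal{M}^-(m+1,k)$ one gets $\Delta_{\un\xi}=((m-1)-(k+m))(k+m)=-(k+1)(k+m)$, hence the factor $e^{-(k+1)(k+m)/2}$. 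These are exactly the two scalar weights in (\ref{e-ck3}).

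The key technical step is to verify that $e^{\Delta_{\un\xi}/2}(f)\in\am$, so that Lemma \ref{l-1} applies and its CK extension is the Laurent series (\ref{e-ck2}). Here the Gaussian weights are decisive. On $\So^m$ the sup-norms of $P_k(f)$ and $Q_k(f)$ are bounded by a polynomial in $k$ times $\|f\|$ (standard estimates for spherical monogenics), while the heat factors decay like $e^{-k^2/2}$. After applying $\cks=e^{-y\gxi}$, which multiplies the $P_k$ term by $|\un x|^k$ and the $Q_k$ term by $|\un x|^{-(k+m)}$, the $e^{-k^2/2}$ decay dominates every power of $|\un x|$ uniformly on each annulus $0<r_0\le|\un x|\le R_0$. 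Both series therefore converge uniformly on compact subsets of $\R^{m+1}\setminus\{0\}$ and define a monogenic function there. This simultaneously places $e^{\Delta_{\un\xi}/2}(f)$ in $\am$, shows $U_{(m)}(f)\in\Mm(\R^{m+1}\setminus\{0\})$, and yields the explicit expansion in (\ref{e-ck3}).

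Finally, to obtain the integral representation, I would write the heat flow as convolution against the heat kernel, $e^{\Delta_{\un\xi}/2}(f)(\xi)=\int_{\So^m}K_1(\xi,\eta)f(\eta)\,d\sigma_m(\eta)$, the scalar kernel $K_1$ acting diagonally on Clifford components. The kernel $K_1(\cdot,\eta)$ is itself in $\am$ by the same Gaussian argument, so applying $\cks$ in the first variable and moving it inside the integral produces $U_{(m)}(f)(\un x)=\int_{\So^m}\tilde K_1(\un x,\eta)f(\eta)\,d\sigma_m(\eta)$, with $\tilde K_1$ the CK extension of $K_1$ in its first slot. The main obstacle I expect is precisely this convergence and interchange step: one must ensure that after heat smoothing the full two-sided series, including the negative-degree $Q_k$ terms that blow up at the origin, still sums to a genuine monogenic function on all of $\R^{m+1}\setminus\{0\}$, and that the pointwise CK operator commutes with the $L^2$-integration. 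Both rest entirely on the super-exponential $e^{-k^2/2}$ decay supplied by the heat semigroup.
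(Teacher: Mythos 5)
Your proposal is correct and follows the same overall strategy as the paper's proof: diagonalize the heat semigroup on the spherical monogenics via (\ref{e-loe}), (\ref{e-lop}) and (\ref{e-sdo1}) (your eigenvalue computation $\Delta_{\un \xi}=-(k+1)(k+m)$ on $\Mm^-(m+1,k)$ is exactly right), verify that the heat-smoothed function lies in $\am$ so that Lemma \ref{l-1} identifies its CK extension with the two-sided Laurent series, and let the Gaussian factors $e^{-k(k+m-1)/2}$ absorb the growth of the coefficients. Two technical steps, however, are handled by genuinely different means. Where you invoke ``standard estimates'' giving sup-norm bounds polynomial in $k$ for $P_k(f)$ and $Q_k(f)$ --- which is true, via the reproducing kernels $C^{\pm}_{m+1,k}$ and a dimension count, and in fact sharper than what the paper uses --- the paper derives explicit factorial bounds of the form $\frac{(2k+m-1)!}{(m-1)!}\,(k+2m-2)\,\|f\|$ directly from the Gegenbauer expansion of the zonal spherical monogenics, quoting the expansion (\ref{e-kex}) of $K_1$ from \cite{DSS,DQC}; factorial growth is still crushed by $e^{-k^2/2}$ after an application of Stirling, so either bound suffices, but the paper's route is self-contained and, more importantly, yields the integral representation in (\ref{e-ck3}) for free: since $P_k(f)(\un \eta)=\int_{\So^m}C^+_{m+1,k}(\un \eta,\un \xi)f(\un \xi)\,d\sigma_m$ and likewise for $Q_{k-1}$, the kernel identity follows by summing term by term, which disposes of the interchange-of-CK-and-integration issue you flag as the main obstacle without any separate dominated-convergence argument. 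Second, for the negative-degree part you estimate $|\un x|^{-(k+m)}\le r_0^{-(k+m)}$ directly on annuli $r_0\le|\un x|\le R_0$, which works; the paper instead transports the $Q$-series to a positive-power series via the inversion isomorphism $f\mapsto If$, $If(\un x)=\frac{\un x}{|x|^{m+1}}\,f\bigl(\frac{\un x}{|x|^{2}}\bigr)$, between $\Mm(\R^{m+1})$ and $\Mm_0(\R^{m+1}\setminus\{0\})$ (\cite{DSS}, section 1.6.5), reducing to the already-settled positive case. Your direct estimate is more elementary; the inversion argument is slicker and reusable (it is exactly what recurs in Lemma \ref{l-2b}). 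If you write your version up, make explicit the Weierstrass-type fact that uniform limits on compact sets of monogenic functions are monogenic, which both arguments tacitly use, and supply at least a one-line justification (reproducing kernel plus Cauchy--Schwarz) for the polynomial sup-norm bound rather than leaving it as a citation.
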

\begin{proof}
From (\ref{e-lop}), (\ref{e-loe}) and (\ref{e-sdo}) we have
\bas   
e^{\Delta_{\un \xi}/2}(f) (\un \eta) &=&
\sum_{k \geq 0} \, e^{-k(k+m-1)/2} \, P_k(f)(\un \eta) +  \sum_{k \geq 0} \,  
 e^{-(k+1)(k+m)/2} \,   Q_k(f)(\un \eta)    \\
&=& \int_{\So^m}  K_1(\un \eta, \un \xi) \, f(\un \xi) \, d\sigma_m(\un \xi) . 
\eas
From \cite{DSS,DQC} we obtain
\be
\label{e-kex}
K_1(\un \eta, \un \xi)  = \sum_{k \geq 0} \, e^{-k(k+m-1)/2} \left(C^+_{m+1, k}(\un \eta, \un \xi) +
C^-_{m+1, k-1}(\un \eta, \un \xi) \right) , 
\ee
where $C^-_{m+1, -1} =0$,
\bas
C^+_{m+1, k}(\un \eta, \un \xi) &=& \frac 1{1-m} \left[-(m+k-1) \, C_k^{(m-1)/2}(<\un \eta, \un \xi>)+
(1-m) \,  C_{k-1}^{(m+1)/2}(<\un \eta, \un \xi>) \,  \un \eta  \wedge \un \xi \right],\\
C^-_{m+1, k-1}(\un \eta, \un \xi) &=& \frac 1{m-1} \left[k \, C_k^{(m-1)/2}(<\un \eta, \un \xi>)+
(1-m) \,  C_{k-1}^{(m+1)/2}(<\un \eta, \un \xi>)  \,  \un \eta  \wedge \un \xi \right], \quad k \geq 1,  \, 
\eas
$\un \eta \wedge \un \xi = \sum_{i<j} (\eta_i\xi_j-\eta_j\xi_i)e_{ij}$
and $C_k^\nu$ denotes the Gegenbauer polynomial of degree $k$ associated with $\nu$. 

Now we prove that $K_1( \cdot , \xi) \in \am$ for every $\xi \in \So^{m+1}$.  
From Lemma \ref{l-1} and (\ref{e-kex}) we conclude that if $K_1( \cdot , \xi) $ has a CK extension then its
Laurent series is given by
\ba
\label{e-uut}
\tilde K_1(\un x, \un \xi)  &=& \tilde K_1^+(\un x, \un \xi) + \tilde K_1^-(\un x, \un \xi)  = \\
\nonumber &=& \sum_{k \geq 0} \, e^{-k(k+m-1)/2} \, |x|^k \, C^+_{m+1, k}\left(\frac{\un x}{|x|}, \un \xi\right) + 
 \sum_{k \geq 1} \, e^{-k(k+m-1)/2} \,  |x|^{-(k+m-1)} \, C^-_{m+1, k-1}\left(\frac{\un x}{|x|}, \un \xi\right)   .
\ea
Let us now show that this series is uniformly convergent in all compact subsets 
of  $\R^{m+1}\setminus \{0\}$. 
From the explicit expressions   for the degree $k$ Gegenbauer polynomials (see e.g. \cite{DSS}, p. 182)
\be
\nonumber
  C_k^{m/2}(<\un \eta, \un \xi>) =
\sum_{j=0}^{[k/2]} \, \frac{(-1)^j 2^{k-2j}(m/2)_{k-j}}{j! (k-2j)!}  \, <\un \eta, \un \xi>^{k-2j}, 
\ee
where $(a)_j = a (a+1) \cdots (a+j-1)$.  We see that
$$
|C_k^{m/2}(<\un \eta, \un \xi>)| \leq \frac{(m+2k)!!}{(m-1)!!}\sum_{j=0}^{[k/2]} \frac{2^{-j}}{j!(k-2j)!}\leq  \frac{(2k+m)!}{(m-1)!}  \,  ,
\quad \forall \eta, \xi \in \So^m . 
$$
Therefore
we obtain that
\ba
\label{e-ineq}
\nonumber |C^+_{m+1, k}(\un \eta, \un \xi)  | & \leq &
\frac{(2k+m-1)!}{(m-1)!} \, (k+m-1) \, + \,  \frac{(2k+m-1)!}{m!} \, m (m-1) \\
&=& \frac{(2k+m-1)!}{(m-1)!} \, (k+2m-2)  \,  ,
\quad \forall \eta, \xi \in \So^m .
\ea
Let $s \in (0,1)$. From the Stirling formula and (\ref{e-ineq}) we conclude that there 
exists $k_0 \in \N$ such that
$$
\nonumber |C^+_{m+1, k}(\un \eta, \un \xi)  |  \leq  e^{s  k(k+m-1)/2} \,  ,
\quad \forall \eta, \xi \in \So^m, \forall k > k_0 ,
$$
and therefore
$$
e^{-k(k+m-1)/2} \, |C^+_{m+1, k}\left(\frac{\un x}{|x|}, \un \xi\right)|  \leq 
e^{-(1-s)  k(k+m-1)/2}, \quad \forall \eta, \xi \in \So^m, \forall k > k_0 .
$$
Then the series,
$$
\tilde K_1^+(\un x, \un \xi) =  \sum_{k \geq 0} \, e^{-k(k+m-1)/2} \, |x|^k \, C^+_{m+1, k}\left(\frac{\un x}{|x|}, \un \xi\right) , 
$$
is uniformly convergent on all compact subsets of $\R^{m+1}$ and therefore its sum is monogenic on  $\R^{m+1}$ in the first variable.
To prove that the second series in  (\ref{e-uut})  is 
 uniformly convergent in compact subsets of
 $\R^{m+1}\setminus \{0\}$ we use 
the fact that the inversion is an isomorphism between $\Mm(\R^{m+1})$ and
$\Mm_0(\R^{m+1} \setminus \{0\})$ 
  (see section 1.6.5 of \cite{DSS})
$$
f \mapsto If,  \, If(\un x) = \frac{\un x}{|x|^{m+1}} \, f\left(\frac{\un x}{|x|^{2}}\right)  \, . 
$$
It is then equivalent  to prove that the 
series
$$
 \left((I\otimes {\rm Id}) (\tilde K_1^-)\right)(\un x, \un \xi)  = \frac{\un x}{|x|^{2}} \, 
 \sum_{k \geq 1} \, e^{-k(k+m-1)/2} \,  |x|^{k} \, C^-_{m+1, k-1}\left(\frac{\un x}{|x|}, \un \xi\right)   ,
$$
is uniformly convergent on compact subsets of $\R^{m+1}$.
But this is a direct consequence of the following inequalities for $|C^-_{m+1, k-1}(\un \eta, \un \xi)|$,
similar to the inequalities
  (\ref{e-ineq})
 for  $|C^+_{m+1, k}(\un \eta, \un \xi)|$,
\be
\label{e-ineq2}
|C^-_{m+1, k-1}(\un \eta, \un \xi)  |  \leq \frac{(2k+m-1)!}{(m-1)!} \, (k+m-1)  \,  ,
\quad \forall \eta, \xi \in \So^m \, .
\ee
We have thus established that $\tilde K_1( \cdot , \xi) \in \Mm(\R^{m+1} \setminus \{0 \}),   \, \forall \xi \in \So^m$
with Laurent series given by (\ref{e-uut}).  Analogously we can show that
$\tilde K_1( \cdot , \cdot) \in C^\infty(\R^{m+1} \setminus \{0 \}  \times \So^m) \otimes \C_{m+1}$.  

 From (\ref{e-ineq}) and  (\ref{e-ineq2}), we also obtain,
\bas
|P_k(f)(\eta)| &=& \left|\int_{\So^m} \, C^+_{m+1, k}(\un \eta, \un \xi) \, f(\xi) \, d\sigma_m\right|
\leq \frac{(2k+m-1)!}{(m-1)!} \, (k+2m-2)  \, ||f||,  \,  \\
|Q_{k-1}(f)(\eta)| &=&  \left|\int_{\So^m} \, C^-_{m+1, k-1}(\un \eta, \un \xi) \, f(\xi) \, d\sigma_m\right|
\leq \frac{(2k+m-1)!}{(m-1)!} \, (k+m-1)  \, ||f|| ,  \,   \quad \forall \eta \in \So^m \, .
\eas
As in the case of $\tilde K_1(\cdot , \xi)$,
these inequalities imply that, for every $f \in L^2(\So^m, d\sigma_m) \otimes \C_{m+1}$, 
the Laurent series for $U_m(f)$ in (\ref{e-ck3})
is uniformly convergent on compact subsets of $\R^{m+1}\setminus \{0\}$.

\end{proof}

\begin{lemma}
\label{l-3}
The map $U_{(m)}$ in (\ref{e-cstm}) and (\ref{e-ck3}) is an isometry for the measure factor
$\tilde \rho_m$ given by (\ref{e-rom}).
\end{lemma}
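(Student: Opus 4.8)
The plan is to use the fact that, by (\ref{e-ck3}), $U_{(m)}$ acts diagonally on the decomposition of $f \in \l2sm$ into spherical monogenics, so that the isometry property reduces to a one--dimensional radial Gaussian computation on each eigenspace. Writing $\un x = r \un\xi$ with $r = |\un x| = e^y$ and $\xi \in \So^m$, the volume element factorizes in spherical coordinates as $d^{m+1}x = r^m\, dr\, d\sigma_m = e^{(m+1)y}\, dy\, d\sigma_m$ (up to the fixed constant relating the round surface measure to the unit--volume normalization $d\sigma_m$). Since, by (\ref{e-sdo1}), the subspaces $\Mm^+(m+1,k)$ and $\Mm^-(m+1,l)$ are mutually orthogonal in $\l2sm$ as eigenspaces of the self--adjoint operator $\gxi$ with the distinct eigenvalues $-k$ and $l+m$, integrating $\langle U_{(m)}(f), U_{(m)}(f)\rangle$ over each sphere $\{|\un x| = r\}$ annihilates every cross term. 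First I would thereby reduce the full integral to
\[
\int_{\R^{m+1}\setminus\{0\}} \langle U_{(m)}(f), U_{(m)}(f)\rangle \, \tilde\rho_m\, d^{m+1}x
= \sum_{k\geq 0} c^+_k\, \|P_k(f)\|^2 + \sum_{k\geq 0} c^-_k\, \|Q_k(f)\|^2,
\]
where $\|\cdot\|$ is the norm of $\l2sm$ and the coefficients $c^+_k, c^-_k$ are purely radial integrals that I must show equal $1$.

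Next I would evaluate these integrals. Collecting from (\ref{e-ck3}) the radial factor $e^{-k(k+m-1)/2}\,|\un x|^k$ of the $P_k$ part together with $\tilde\rho_m$ from (\ref{e-rom}) and the measure, the substitution $y = \log r$ yields
\[
c^+_k = \frac{e^{-(m-1)^2/4}}{\sqrt\pi}\, e^{-k(k+m-1)} \int_{-\infty}^{\infty} e^{-y^2 + (2k+m-1)y}\, dy .
\]
Using $\int_{-\infty}^{\infty} e^{-y^2 + by}\, dy = \sqrt\pi\, e^{b^2/4}$ with $b = 2k+m-1$ and the identity $(2k+m-1)^2/4 = k(k+m-1) + (m-1)^2/4$, the damping factor $e^{-k(k+m-1)}$ and the constant $e^{-(m-1)^2/4}$ are cancelled exactly, giving $c^+_k = 1$. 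The analogous computation for the $Q_k$ part, with radial factor $e^{-(k+1)(k+m)/2}\,|\un x|^{-(k+m)}$, produces a Gaussian exponent $b = -(2k+m+1)$; here the relevant identities are $(2k+m+1)^2/4 - (k+1)(k+m) = (m+1)^2/4 - m$ and $(m+1)^2/4 - (m-1)^2/4 = m$, which together force $c^-_k = 1$ as well.

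The substance of the lemma, and the precise point at which the form (\ref{e-rom}) of $\tilde\rho_m$ enters, is exactly this double cancellation: completing the square in each Gaussian reproduces the heat--semigroup damping factors $e^{-k(k+m-1)}$ and $e^{-(k+1)(k+m)}$, while the $k$--independent prefactor $e^{-(m-1)^2/4}$ in the measure is just what is needed to absorb the residual $(m-1)^2/4$ and $(m+1)^2/4 - m$ contributions, so that both families of coefficients equal $1$ for every $k$ and every $m$; this also accounts for the observation in Remark \ref{r-1}. I expect the only genuine obstacle to be the interchange of the infinite sums with the radial integration, but since all summands are nonnegative after taking $\langle\cdot,\cdot\rangle$ and integrating over each sphere, Tonelli's theorem legitimizes the termwise evaluation, and the resulting identity $\|U_{(m)}(f)\|^2 = \sum_{k} \|P_k(f)\|^2 + \sum_{k} \|Q_k(f)\|^2 = \|f\|^2$ simultaneously establishes the isometry and the square--integrability of $U_{(m)}(f)$.
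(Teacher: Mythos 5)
Your proof is correct and takes essentially the same route as the paper's: both reduce isometry, via orthogonality of the spherical monogenic eigenspaces, to purely radial constraint integrals (the paper's (\ref{e-const1}) and (\ref{e-const2}), which match your $c^+_k$ and $c^-_k$ after the substitution $y=\log r$), and then check them against $\tilde\rho_m$. You simply carry out explicitly the completing-the-square Gaussian computation that the paper dismisses as ``easy to verify,'' and your Tonelli remark makes explicit the termwise integration the paper leaves implicit.
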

\begin{proof}
Given the $SO(m+1, \R)$--invariance of the measures on $\So^m$ and on $\R^{m+1}\setminus \{0\}$ in (\ref{e-cstm}) and (\ref{e-rom}), so that (\ref{e-mdec}) is an orthogonal decomposition and so is (\ref{e-ck3}), 
we see that to prove isometricity of $U_{(m)}$ it is sufficient to prove 
\ba
\label{e-iso1}
\nonumber ||U_{(m)}(P_k(f))|| &=& ||P_k(f)||,   \\
||U_{(m)}(Q_k(f))|| &=& ||Q_k(f)||, 
\ea
for all $k \in \Z_{\geq 0}$ and $f \in \l2sm$.
We have
\bas
||U_{(m)}(P_k(f))||^2 &=& e^{-k(k+m-1)} \, \int_o^\infty \, r^{2k} \rho_m(\log(r)) r^m dr \,  ||P_k(f)||^2 , \\
||U_{(m)}(Q_{k-1}(f))||^2 &=& e^{-k(k+m-1)} \, \int_o^\infty \, r^{-2(k-1+m)} \rho_m(\log(r)) r^m dr \,  ||Q_{k-1}(f)||^2  \, , 
\eas
and therefore isometricity is equivalent to the following two  infinite systems  of equations setting constraints
on the Laplace transform of the function $\rho_m(y)$. The system coming from the $P_k$ is
\be
\label{e-const1}
\int_\R \, \rho_m(y) \, e^{y(2k+m+1)} \, dy =  e^{k(k+m-1)} \, , \qquad  k \in \Z_{\geq 0} , 
\ee
and the system coming from the $Q_k$ is
\be
\label{e-const2}
\int_\R \, \rho_m(y) \, e^{-y(2k+m-3)} \, dy =  e^{k(k+m-1)} \, ,  \qquad  k \in \Z_{\geq 0} .
\ee
It is easy to verify that   the function $\rho_m$ corresponding to $\tilde \rho_m$ 
in (\ref{e-rom})
$$
\rho_m(y) =   \frac {e^{-\frac{(m-1)^2}{4}}}{\sqrt{\pi}} \, e^{-y^2-2y}.
$$
satisfies both (\ref{e-const1}) and  (\ref{e-const2}).
\end{proof}

\begin{remark}
\label{r-b1}
Notice that each of the two systems  (\ref{e-const1}) and 
 (\ref{e-const2}) 
determines 
$\rho_m$ uniquely so that it is remarkable that they both give the same solution.
\end{remark}

\begin{proof} (of Theorem \ref{t-1}). From Lemmas \ref{l-1}, \ref{l-2} and \ref{l-3} we
see that the only missing part is the surjectivity of $U_{(m)}$. But this follows from the fact that 
the space $\tilde V$ in  (\ref{e-ssm2})
is dense, with respect to uniform convergence
on compact subsets, in the space of monogenic functions on $\R^{m+1}\setminus \{0\}$ and therefore is  also
dense 
on  ${\mathcal M}L^2(\R^{m+1}\setminus \{0\}, \tilde \rho_m \,  d^{m+1}x)$ since this has finite measure. Since 
the image of an isometric map is closed and the
image of $U_{(m)}$ contains $\tilde V$ we conclude that
$U_{(m)}$ is surjective. 
\end{proof}

As we mentioned in the introduction the mechanism for the unitarity of the
CST, $U_{(m)}$, was its
factorization   into a contraction given by 
heat operator evolution 
at time $t=1$ followed by Cauchy-Kowalewsky (CK) extension, 
which 
exactly compensates the contraction, given our choice of measure on
$\R^{m+1}\setminus \{0\}$.

\subsection{One-parameter family of unitary transforms}
\label{ss-32}

In the present section we will  consider a one-parameter family of transforms, using heat operator evolution 
at time $t>0$ followed by CK extension. We show that, by changing 
the measure on $\R^{m+1}\setminus \{0\}$ to a new Gaussian 
(in the coordinate $\log(|x|)$) measure $$d\mu_t =
\tilde \rho^t_m \,  d^{m+1}x,$$ these transforms 
are unitary.
Thus we consider the transforms
\ba
\label{e-cstmb}
\nonumber U^t_{(m)} \, &:& \,   L^2(\So^{m}, d\sigma_{m}) \otimes \C_{m+1} \longrightarrow 
{\mathcal M}L^2(\R^{m+1}\setminus \{0\}, \tilde \rho^t_m \,  d^{m+1}x) \\
U^t_{(m)} &=& CK_{\So^m} \circ e^{t\Delta_{\un \xi}/2}
= e^{-y\Gamma_{\un \xi}} \circ e^{t\Delta_{\un \xi}/2} \\
\nonumber U^t_{(m)}(f)(\un x) &=& \int_{\So^m} \, \tilde K_t(\un x, \un \xi) \, f(\xi) \, 
d \sigma_m  \, , 
\ea
where
$\tilde K_t(\cdot , \xi)$ denotes the CK extension
of $K_t$ to  
$\R^{m+1}\setminus \{0\}$ in its first variable.

Our goal is to find (whether there exist), for every $t>0$, a function $\tilde \rho^t_m$ on 
$\R^{m+1}\setminus \{0\}$,
$$
\tilde \rho^t_m(\un x) = \rho^t_m(y) , \, 
$$
which makes the (well defined) map in (\ref{e-cstmb}) unitary.
Again, for $m=1$, there is a unique positive answer to the above question 
given by
$$
\rho_1^t(y) = \frac 1{\sqrt{t\pi}} \, e^{- \frac{y^2}t -2y}
$$
so that
$$
\tilde \rho^t_1(\un x) = \frac 1{\sqrt{t\pi}} \,  e^{- \frac 1t \log^2(|x|)-2\log(|x|)}.
$$

We then have
\begin{theorem}
\label{t-1b}
The map $U^t_{(m)}$ in (\ref{e-cstmb}) is a unitary isomorphism for 
\be
\label{e-romb}
\tilde \rho^t_m(\un x) =  \frac {e^{-\frac{t(m-1)^2}{4}}}{\sqrt{t\pi}} \, e^{- \frac 1t \log^2(|x|)-2\log(|x|)}.
\ee
\end{theorem}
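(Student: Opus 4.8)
The plan is to mirror the three--lemma decomposition used for Theorem \ref{t-1}, carrying the parameter $t>0$ through each step, since $U^t_{(m)}$ differs from $U_{(m)}$ only in replacing the heat time $1$ by $t$. First I would establish that $U^t_{(m)}$ is well defined, reproducing Lemma \ref{l-2}. On spherical monogenics the heat semigroup acts diagonally with the eigenvalues from (\ref{e-lop}), (\ref{e-loe}) and (\ref{e-sdo1}), and combined with the CK extension formula of Lemma \ref{l-1} one obtains
\[
U^t_{(m)}(f)(\un x) = \sum_{k\ge 0} e^{-tk(k+m-1)/2}\,|x|^k\,P_k(f)\!\left(\tfrac{\un x}{|x|}\right) + \sum_{k\ge 0} e^{-t(k+1)(k+m)/2}\,|x|^{-(k+m)}\,Q_k(f)\!\left(\tfrac{\un x}{|x|}\right).
\]
Uniform convergence on compact subsets of $\R^{m+1}\setminus\{0\}$ follows verbatim from the Gegenbauer/Stirling estimates (\ref{e-ineq}) and (\ref{e-ineq2}): given $t>0$, choose $s\in(0,\min(t,1))$ so that the Gaussian factor $e^{-tk(k+m-1)/2}$ dominates the at--most--factorial growth of the Gegenbauer coefficients and the series converges; hence $U^t_{(m)}$ maps into $\Mm(\R^{m+1}\setminus\{0\})$.

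Second comes isometry, following Lemma \ref{l-3}. By $SO(m+1,\R)$--invariance the spherical--monogenic decomposition and its image are orthogonal, so it suffices to check $||U^t_{(m)}(P_k(f))|| = ||P_k(f)||$ and $||U^t_{(m)}(Q_k(f))|| = ||Q_k(f)||$. Writing $y=\log r$ and separating radial and angular integrals, these reduce to the two moment conditions
\[
\int_\R \rho^t_m(y)\,e^{(2k+m+1)y}\,dy = e^{tk(k+m-1)}, \qquad \int_\R \rho^t_m(y)\,e^{-(2k+m-3)y}\,dy = e^{tk(k+m-1)},
\]
the $t$--analogues of (\ref{e-const1})--(\ref{e-const2}). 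I would verify both for the candidate $\rho^t_m(y)=\frac{e^{-t(m-1)^2/4}}{\sqrt{t\pi}}\,e^{-y^2/t-2y}$ by completing the square in the Gaussian integral $\int_\R e^{-y^2/t+by}\,dy=\sqrt{\pi t}\,e^{tb^2/4}$; with $b=2k+m-1$ and $b=-(2k+m-1)$ respectively, the identity $(2k+m-1)^2-(m-1)^2=4k(k+m-1)$ produces $e^{tk(k+m-1)}$ in both cases.

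The point I expect to be the crux --- a short computation rather than a genuine obstacle --- is that the two a priori independent systems (Remark \ref{r-b1}) are satisfied by one and the same $\rho^t_m$. This is forced by the evenness in $b$ of the Gaussian moment $\sqrt{\pi t}\,e^{tb^2/4}$, the $P_k$ and $Q_k$ systems corresponding precisely to $b$ and $-b$; the only effect of $t$ is to rescale the exponent, which is exactly absorbed by the factor $e^{-t(m-1)^2/4}$ and the normalization $1/\sqrt{t\pi}$. Finally, surjectivity is identical to the $t=1$ case: the space $\tilde V$ of (\ref{e-ssm2}) is dense in $\Mm(\R^{m+1}\setminus\{0\})$ for uniform convergence on compact sets, and since $\tilde\rho^t_m\,d^{m+1}x$ has finite total mass (indeed the radial integral gives $\int_\R \rho^t_m(y)\,e^{(m+1)y}\,dy=1$) it is dense in $\Mm L^2(\R^{m+1}\setminus\{0\},\tilde\rho^t_m\,d^{m+1}x)$ as well; as the image of an isometry is closed and contains $\tilde V$, the map $U^t_{(m)}$ is onto, completing the proof.
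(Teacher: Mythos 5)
Your proposal is correct and takes essentially the same approach as the paper, which proves Theorem \ref{t-1b} by rerunning Lemmas \ref{l-2} and \ref{l-3} with heat time $t$ (Lemmas \ref{l-2b} and \ref{l-3b}) and finishing surjectivity exactly as in Theorem \ref{t-1}. Your explicit Gaussian computation $\int_\R e^{-y^2/t+by}\,dy=\sqrt{\pi t}\,e^{tb^2/4}$ with $b=\pm(2k+m-1)$ and the identity $(2k+m-1)^2-(m-1)^2=4k(k+m-1)$ is precisely the verification the paper leaves to the reader as ``easy to verify.''
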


Given the factorized form of $U^t_{(m)}$ in (\ref{e-cstmb}) we have the
diagram
\begin{align}
 \label{d333b}
\begin{gathered}
\xymatrix{
&&  \mathcal{M}L^2 (\R^{m+1} \setminus \{0\}, \tilde \rho^t_m \, d^{m+1}x)  \\
L^2(\So^m, d \sigma_m)\otimes \C_{m+1}  \ar@{^{(}->}[rr]_{e^{{t\Delta_{\un \xi}}/2}}  \ar[rru]^{U^t_{(m)}} && \am 
, \ar[u]_{CK_{\So^m} \, = \, e^{-y \Gamma_{\un \xi}}}
  }
\end{gathered}
\end{align}
Again we divide the proof of Theorem \ref{t-1b} into several lemmas.
Notice, however, that Lemma \ref{l-1} remains unchanged. 

%
%\begin{lemma}
%\label{l-2c}
%Let $f \in \l2sm$ and consider its  decomposition
%in spherical monogenics, 
%\be
%\nonumber
%f = \sum_{k \geq 0} \, P_k(f) + \sum_{k \geq 0} \, Q_k(f).  
%\ee
%Then
%\ba
%\label{e-ck3b}
%U^t_{(m)}(f)(\un x) &=& e^{-y \Gamma_{\un \xi}} \circ e^{t\Delta_{\un \xi}/2}(f)(\un x) = \\ 
%\nonumber &=& \sum_{k \geq 0} \, e^{-tk(k+m-1)/2} \, |x|^k \, P_k(f)(\frac{\un x}{|x|})  +  \sum_{k \geq 0} \,  
% e^{-t(k+1)(k+m)/2} \,  |x|^{-(k+m)}  \,  Q_k(f)(\frac{\un x}{|x|})  .
%\ea
%\end{lemma}

\begin{lemma}
\label{l-2b}
Let $f \in \l2sm$ and consider its  decomposition
in spherical monogenics, 
\be
\nonumber
f = \sum_{k \geq 0} \, P_k(f) + \sum_{k \geq 0} \, Q_k(f).  
\ee
Then the map
\bas
\label{e-cstm10t}
 U^t_{(m)} \, &:& \,   L^2(\So^{m}, d\sigma_{m}) \otimes \C_{m+1} \longrightarrow 
{\mathcal M}(\R^{m+1}\setminus \{0\})\\
U^t_{(m)} &=& CK_{\So^m} \circ  e^{t\Delta_{\un \xi}/2}= e^{-y \Gamma_{\un \xi}} \circ e^{t\Delta_{\un \xi}/2} \,  ,
\eas 
is  well  defined and
\ba
\label{e-ck3b}  \nonumber
U^t_{(m)}(f)(\un x) &=& e^{-y \Gamma_{\un \xi}} \circ e^{t\Delta_{\un \xi}/2}(f)(\un x)=\\
\nonumber &=& \sum_{k \geq 0} \, e^{-tk(k+m-1)/2} \, |x|^k \, P_k(f)\left(\frac{\un x}
{|x|}\right)  +  \sum_{k \geq 0} \,  
 e^{-t(k+1)(k+m)/2} \,  |x|^{-(k+m)}  \,  Q_k(f)\left(\frac{\un x}{|x|}\right)  \\
&=& \int_{\So^m} \tilde K_t(\un x, \un \xi) \, f(\xi) \, d\sigma_m(\xi), 
\ea
where  $\tilde K_t$ is the CK extension
to $\R^{m+1} \setminus \{0\}$
of $K_t$ in its first variable. 
\end{lemma}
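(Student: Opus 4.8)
The plan is to follow, essentially verbatim, the structure of the proof of Lemma \ref{l-2}, replacing the fixed time $t=1$ by a general $t>0$; the only point requiring attention is that none of the convergence estimates degrade as $t$ ranges over $(0,\infty)$. First I would compute the action of the heat semigroup $e^{t\Delta_{\un \xi}/2}$ on $f$ term by term in its spherical--monogenic decomposition. By (\ref{e-lop}), (\ref{e-loe}) and (\ref{e-sdo1}), the operator $\Gamma_{\un \xi}$ acts on $P_k(f)$ with eigenvalue $-k$ and on $Q_k(f)$ with eigenvalue $k+m$, so that $\Delta_{\un \xi} = ((m-1)I - \Gamma_{\un \xi})\Gamma_{\un \xi}$ has eigenvalue $-k(k+m-1)$ on $P_k(f)$ and $-(k+1)(k+m)$ on $Q_k(f)$. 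Hence
\[
e^{t\Delta_{\un \xi}/2}(f) = \sum_{k\ge 0} e^{-tk(k+m-1)/2}\,P_k(f) + \sum_{k\ge 0} e^{-t(k+1)(k+m)/2}\,Q_k(f),
\]
and this coincides with convolution against the heat kernel $K_t$ on $\So^m$, whose spherical--monogenic expansion is obtained from (\ref{e-kex}) by replacing the factor $e^{-k(k+m-1)/2}$ with $e^{-tk(k+m-1)/2}$.

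Next I would apply the Cauchy--Kowalewski extension. By Lemma \ref{l-1}, $\cks = e^{-y\Gamma_{\un \xi}}$ sends $P_k(f)$ to $|\un x|^k P_k(f)(\un x/|\un x|)$ and $Q_k(f)$ to $|\un x|^{-(k+m)}Q_k(f)(\un x/|\un x|)$, which produces exactly the Laurent series displayed in (\ref{e-ck3b}). The same substitution applied to (\ref{e-uut}) identifies the kernel of the composed map as the CK extension $\tilde K_t(\cdot,\un\xi)$ of $K_t$ in its first variable, so that the integral representation in (\ref{e-ck3b}) holds once absolute convergence is established.

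The one substantive step is uniform convergence on compact subsets of $\R^{m+1}\setminus\{0\}$, and this is where I expect the (mild) work to lie. I would reuse the Gegenbauer bounds (\ref{e-ineq}) and (\ref{e-ineq2}) together with Stirling's formula. Since those bounds yield $|C^+_{m+1,k}|,\ |C^-_{m+1,k-1}| \le e^{\varepsilon k(k+m-1)/2}$ for every $\varepsilon>0$ and all large $k$, for a fixed $t>0$ I would choose $\varepsilon < t$, so that the coefficients $e^{-tk(k+m-1)/2}|C^{\pm}|$ decay like $e^{-(t-\varepsilon)k(k+m-1)/2}$; this dominates the radial factor $|\un x|^k$ on compact sets and gives uniform convergence of the first (regular) series, which is therefore monogenic on all of $\R^{m+1}$. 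For the second series I would again invoke the inversion isomorphism $I$ between $\Mm(\R^{m+1})$ and $\Mm_0(\R^{m+1}\setminus\{0\})$ to convert the negative--degree terms into a regular series with coefficient decay of the same type, establishing monogenicity on $\R^{m+1}\setminus\{0\}$. The analogous bounds on $|P_k(f)(\eta)|$ and $|Q_{k-1}(f)(\eta)|$ then show that the series for $U^t_{(m)}(f)$ itself converges uniformly on compacts for every $f\in\l2sm$.

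The main obstacle is essentially cosmetic: nothing genuinely new occurs relative to Lemma \ref{l-2}, precisely because for any fixed $t>0$ the Gaussian--type decay $e^{-tk(k+m-1)/2}$ in $k$ still overwhelms the factorial growth of the Gegenbauer polynomials, the parameter $t$ only rescaling the admissible range of $\varepsilon$. With these estimates in place, the map $U^t_{(m)}$ is well defined, lands in $\Mm(\R^{m+1}\setminus\{0\})$, and admits both the Laurent representation and the integral representation asserted in (\ref{e-ck3b}).
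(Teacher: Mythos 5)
Your proposal is correct and follows essentially the same route as the paper: the paper's proof of Lemma \ref{l-2b} simply observes that the argument of Lemma \ref{l-2} goes through verbatim, since the Gaussian decay (in $k$) of the coefficients $e^{-tk(k+m-1)/2}$ together with the bounds (\ref{e-ineq}), (\ref{e-ineq2}) yields uniform convergence on compact subsets and hence monogenicity of $\tilde K_t(\cdot,\un\xi)$ and $U^t_{(m)}(f)$, with the Laurent series (\ref{e-uutb}) and (\ref{e-ck3b}). Your explicit choice $\varepsilon < t$ in the Stirling estimate correctly spells out the one point the paper leaves implicit, namely that the argument is uniform in the fixed parameter $t>0$.
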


\begin{proof} The proof is identical to the proof of Lemma \ref{l-2}. 
The Gaussian form (in $k$) of the coefficients 
coming from $e^{t\Delta_{\un \xi}/2}$
and the inequalities
(\ref{e-ineq}), (\ref{e-ineq2}) again  imply  that
$\tilde K_t(\cdot , \xi)$ and $U^t_{(m)}(f)$ are monogenic on
$\R^{m+1}\setminus \{0\}$ and their 
 Laurent 
series are given by
\ba
\label{e-uutb}
\tilde K_t(\un x, \un \xi)  &=& \tilde K_t^+(\un x, \un \xi) + \tilde K_t^-(\un x, \un \xi)  \\
\nonumber &=& \sum_{k \geq 0} \, e^{-tk(k+m-1)/2} \, |x|^k \, C^+_{m+1, k}\left(\frac{\un x}{|x|}, \un \xi\right) + 
 \sum_{k \geq 1} \, e^{-tk(k+m-1)/2} \,  |x|^{-(k+m-1)} \, C^-_{m+1, k-1}\left(\frac{\un x}{|x|}, \un \xi\right)   ,
\ea
and by (\ref{e-ck3b}).
\end{proof}

\begin{lemma}
\label{l-3b}
The map $U^t_{(m)}$ in (\ref{e-cstmb}) and (\ref{e-ck3b}) is an isometry for the measure factor
$\tilde \rho^t_m$ given by (\ref{e-romb}).
\end{lemma}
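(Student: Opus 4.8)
The plan is to reproduce the argument of Lemma \ref{l-3} verbatim, carrying the parameter $t$ through the heat-semigroup eigenvalues. First I would invoke the $SO(m+1,\R)$-invariance of both $d\sigma_m$ on $\So^m$ and of $\tilde\rho^t_m\, d^{m+1}x$ on $\R^{m+1}\setminus\{0\}$, the latter holding because $\tilde\rho^t_m$ depends only on $|\un x|$. This makes the spherical-monogenic decomposition (\ref{e-ck3b}) orthogonal in both the source and the target, so that isometry of $U^t_{(m)}$ reduces, exactly as in (\ref{e-iso1}), to the scalar identities
$$||U^t_{(m)}(P_k(f))|| = ||P_k(f)||, \qquad ||U^t_{(m)}(Q_{k-1}(f))|| = ||Q_{k-1}(f)||,$$
for every $k \in \Z_{\geq 0}$ and $f \in \l2sm$.

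Next I would compute each side explicitly. Writing $r = |\un x| = e^y$ and $d^{m+1}x = r^m\, dr\, d\sigma_m$, and reading off from (\ref{e-ck3b}) that the single eigenvalue $e^{-tk(k+m-1)/2}$ is attached to both $P_k$ (with radial factor $r^k$) and $Q_{k-1}$ (with radial factor $r^{-(k+m-1)}$), the radial integral factors out of the $\So^m$ integral. After the substitution $y = \log r$, isometry becomes equivalent to the two systems of moment constraints on $\rho^t_m$,
$$\int_\R \rho^t_m(y)\, e^{y(2k+m+1)}\, dy = e^{tk(k+m-1)}, \qquad \int_\R \rho^t_m(y)\, e^{-y(2k+m-3)}\, dy = e^{tk(k+m-1)}, \qquad k \in \Z_{\geq 0},$$
which are precisely the $t$-deformations of the systems (\ref{e-const1}) and (\ref{e-const2}) appearing in Lemma \ref{l-3}.

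Finally I would verify that the Gaussian $\rho^t_m$ corresponding to $\tilde\rho^t_m$ in (\ref{e-romb}), namely $\rho^t_m(y) = \frac{e^{-t(m-1)^2/4}}{\sqrt{t\pi}}\, e^{-y^2/t - 2y}$, solves both systems. Using the elementary Gaussian integral $\int_\R e^{-ay^2 + by}\, dy = \sqrt{\pi/a}\; e^{b^2/(4a)}$ with $a = 1/t$: completing the square in the first system gives the exponent $-y^2/t + (2k+m-1)y$, so $b = 2k+m-1$, while in the second it gives $-y^2/t - (2k+m-1)y$, so $b = -(2k+m-1)$. Since only $b^2$ enters, both integrals equal $\sqrt{t\pi}\; e^{t(2k+m-1)^2/4}$; multiplying by the prefactor and applying the algebraic identity $(2k+m-1)^2 - (m-1)^2 = 4k(k+m-1)$ collapses the right-hand side to $e^{tk(k+m-1)}$ in each case.

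There is no genuine obstacle once the $t=1$ case of Lemma \ref{l-3} is in hand; the computation is routine. The only point worth flagging is the same coincidence recorded in Remark \ref{r-b1}: each of the two a priori independent systems already determines $\rho^t_m$ uniquely, yet both are solved by the identical Gaussian. This is explained by the completing-the-square step, where the shift $-2y$ in the exponent of $\rho^t_m$ symmetrizes the two families $e^{y(2k+m+1)}$ and $e^{-y(2k+m-3)}$ about the common value $b = \pm(2k+m-1)$, forcing both moment conditions to reduce to the same Gaussian integral.
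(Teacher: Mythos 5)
Your proposal is correct and follows essentially the same route as the paper's proof: $SO(m+1,\R)$-invariance reduces isometry to the component identities, which become the two moment systems (\ref{e-const1b}) and (\ref{e-const2b}), solved by the Gaussian $\rho^t_m$. You have merely made explicit the completing-the-square computation that the paper dismisses as ``easy to verify,'' and your closing observation correctly accounts for the coincidence noted in Remark \ref{r-b1}.
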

\begin{proof}
Given the $SO(m+1, \R)$--invariance of the measures on $\So^m$ and on $\R^{m+1}\setminus \{0\}$ in (\ref{e-cstmb}) and (\ref{e-romb})
we see that to prove isometricity of $U^t_{(m)}$ it is sufficient to prove 
\ba
\label{e-iso1b}
\nonumber ||U^t_{(m)}(P_k(f))|| &=& ||P_k(f)||,  \\
||U^t_{(m)}(Q_k(f))|| &=& ||Q_k(f)||,
\ea
for all $k \in \Z_{\geq 0}$ and $f \in \l2sm$.
Again, isometricity is equivalent to the following two  infinite systems  of equations setting constraints
on the Laplace transform of the functions $\rho^t_m(y)$. The system coming from the $P_k$ is
\be
\label{e-const1b}
\int_\R \, \rho^t_m(y) \, e^{y(2k+m+1)} \, dy =  e^{tk(k+m-1)} \, , \qquad  k \in \Z_{\geq 0} , 
\ee
and the system coming from the $Q_k$ is
\be
\label{e-const2b}
\int_\R \, \rho^t_m(y) \, e^{-y(2k+m-3)} \, dy =  e^{tk(k+m-1)} \, ,  \qquad  k \in \Z_{\geq 0} .
\ee
It is easy to verify that   the function $\rho^t_m$ corresponding to $\tilde \rho^t_m$ in (\ref{e-romb}) 
satisfies both (\ref{e-const1b}) and  (\ref{e-const2b}).
\end{proof}

\begin{proof} 
The proof of Theorem \ref{t-1b} is completed exactly 
as the proof of Theorem \ref{t-1} so that we omit it here.
\end{proof}

\newpage

{\bf \large{Acknowledgements:}} 
The authors were partially
supported by Macau Government FDCT through the project 099/2014/A2,
{\it Two related topics in Clifford analysis}, by the Macao Science and Technology 
Development Fund, MSAR, Ref. 045/2015/A2
and by the University of Macau Research Grant  MYRG115(Y1-L4)-FST13-QT. 
 The authors  
JM and JPN were also partly supported by
FCT/Portugal through the projects UID/MAT/04459/2013 and PTDC/MAT-GEO/3319/2014.

\end{document}